\newcommand{\rk}{\operatorname{rk}}
\newtheorem{theorem}{Theorem}[section]
\newtheorem{proposition}[theorem]{Proposition}
\newtheorem{lemma}[theorem]{Lemma}
\newtheorem{corollary}[theorem]{Corollary}
\numberwithin{equation}{section}
\begin{document}

\baselineskip=16pt

\title[Moduli spaces of framed bundles]{Rationality and
Brauer group of a moduli space of framed bundles}

\author[I. Biswas]{Indranil Biswas}

\address{School of Mathematics, Tata Institute of Fundamental
Research, Homi Bhabha Road, Bombay 400005, India}

\email{indranil@math.tifr.res.in}

\author[T. L. G\'omez]{Tom\'as L. G\'omez}

\address{Instituto de Ciencias Matem\'aticas (CSIC-UAM-UC3M-UCM),
Nicol\'as Cabrera 15, Campus Cantoblanco UAM, 28049 Madrid, Spain}

\email{tomas.gomez@icmat.es}

\author[V. Mu\~{n}oz]{Vicente Mu\~{n}oz}

\address{Facultad de Ciencias Matem\'aticas, Universidad
Complutense de Madrid, Plaza Ciencias 3, 28040 Madrid, Spain}

\email{vicente.munoz@mat.ucm.es}

\subjclass[2000]{14H60, 14F05}

\keywords{Brauer group, rationality, framed bundle, stable bundle}

\date{}

\begin{abstract}
We prove that the moduli spaces of framed bundles over a
smooth projective curve are rational. We compute the Brauer group
of these moduli spaces to be zero under some assumption on the
stability parameter.
\end{abstract}

\maketitle

\section{Introduction}

Let $X$ be a compact connected Riemann surface of genus $g$,
with $g\, \geq\, 2$. A framed bundle on $X$ is a pair of the
form $(E\, ,\phi)$, where
$E$ is a vector bundle on $X$, and
$$
\phi\, :\, E_{x_0}\,\longrightarrow\, {\mathbb C}^r
$$
is a non--zero $\mathbb C$--linear homomorphism, where $r$
is the rank of $E$. The notion of a (semi)stable vector bundle
extends to that for a framed bundle. But the (semi)stability
condition depends on a parameter $\tau\,\in\, {\mathbb R}_{>0}$.
Fix a positive integer $r$, and also fix
a holomorphic line bundle $L$ over $X$. Also, fix a positive number
$\tau\, \in\, \mathbb R$. Let ${\mathcal M}^\tau_L(r)$ be the
moduli space of $\tau$--stable framed bundles of rank $r$
and determinant $L$.

In \cite{BGM}, we investigated the geometric structure of
the variety ${\mathcal M}^\tau_L(r)$. The following theorem
was proved in \cite{BGM}:

\textit{Assume that $\tau\, \in\, (0\, , \frac{1}{(r-1)! (r-1)})$.
Then the isomorphism class of the Riemann surface $X$ is
uniquely determined by the isomorphism class of the
variety ${\mathcal M}^\tau_L(r)$.}

Our aim here is to investigate the rationality properties
of the variety ${\mathcal M}^\tau_L(r)$. We prove the following
(see Theorem \ref{thm1} and Corollary \ref{cor1}):

\textit{The variety ${\mathcal M}^\tau_L(r)$ is rational.}

\textit{If $\tau\, \in\, (0\, , \frac{1}{(r-1)! (r-1)})$, then
$$
{\rm Br}({\mathcal M}^\tau_L(r))\,=\, 0\, ,
$$
where ${\rm Br}({\mathcal M}^\tau_L(r))$ is the Brauer group
of ${\mathcal M}^\tau_L(r)$.}

The rationality of ${\mathcal M}^\tau_L(r)$ is proved by showing
that ${\mathcal M}^\tau_L(r)$ is birational to the total space of
a vector bundle over the moduli space of stable vector bundles $E$
on $X$ together with a line in the fiber of $E$ over a fixed point.
The rationality of ${\mathcal M}^\tau_L(r)$ also follows 
from \cite{Ho-extra}, Example 6.9,
taking $D$ to be the point $x_0$.

The Brauer group of ${\mathcal M}^\tau_L(r)$ is computed by
considering the morphism to the usual moduli space that forgets
the framing.

\section{Rationality of moduli space}

Let $X$ be a compact connected Riemann surface of genus $g$,
with $g\, \geq\, 2$. Fix a holomorphic line bundle $L$ over
$X$, and take an integer $r>0$. Fix a point $x_0\,\in\, X$.
A framed coherent sheaf over $X$ is a pair of the form $(E\, ,\phi)$,
where $E$ is a coherent sheaf on $X$ of rank $r$, and
$$
\phi\, :\, E_{x_0}\,\longrightarrow\, {\mathbb C}^r
$$
is a non--zero $\mathbb C$--linear homomorphism.
Let $\tau>0$ be a real number. A framed coherent sheaf is called
$\tau$--\textit{stable} (respectively, $\tau$--\textit{semistable})
if for all proper subsheaves $E'\,\subset\, E$, we have
\begin{equation}
\label{eq:st}
{\deg E' - \epsilon(E',\phi)\tau}
< {\rk E'} \, \, \frac{\deg E - \tau}{\rk E}
\end{equation}
(respectively, ${\deg E' - \epsilon(E',\phi)\tau}
\leq {\rk E'}({\deg E - \tau})/{\rk E}$), where
$$
\epsilon(E',\phi)\,=\,\left\{
\begin{array}{lcl}
1 & \text{if} & \phi|_{E'_{x_0}}\,\neq\, 0, \\
0 & \text{if} & \phi|_{E'_{x_0}}\,=\, 0\, .
\end{array}
\right .
$$
A framed bundle is a framed coherent sheaf $(E,\varphi)$
such that $E$ is locally free.

We remark that the framed coherent sheaves considered here are
special cases of the objects
considered in \cite{HL}, and hence from \cite{HL}
we conclude that the moduli space ${\mathcal M}^\tau_L(r)$
of $\tau$--stable framed bundles of rank $r$ and determinant $L$
is a smooth quasi--projective variety.

Let $(E\, ,\phi)$ be a $\tau$--semistable framed
coherent sheaf.
We note that if $\tau<1$, then $E$ is necessarily
torsion--free, because a torsion subsheaf of $E$ will
contradict $\tau$--semistability,
hence in this case $E$ is locally free.
But if $\tau$ is large, then $E$ can have torsion.
In particular, the natural compactification
of ${\mathcal M}^\tau_L(r)$ using $\tau$--semistable framed
coherent sheaves could have points which are not framed bundles.

\begin{lemma}\label{lem1}
There is a dense Zariski open subset
\begin{equation}\label{e1}
{\mathcal M}^\tau_L(r)^0\, \subset\, {\mathcal M}^\tau_L(r)
\end{equation}
corresponding to pairs
$(E\, ,\phi)$ such that $E$ is a stable vector bundle
of rank $r$, and $\phi$ is an isomorphism.

The moduli space ${\mathcal M}^\tau_L(r)$ is irreducible.
\end{lemma}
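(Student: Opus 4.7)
The plan is to show that $\mathcal{M}^\tau_L(r)^0$ is a non-empty Zariski open subset of the irreducible variety $\mathcal{M}^\tau_L(r)$; density of $\mathcal{M}^\tau_L(r)^0$ then follows. Openness is immediate: the condition ``$\phi$ is an isomorphism'' is $\det\phi\neq 0$, and the condition ``$E$ is stable'' is Zariski open on any flat family of vector bundles by standard semicontinuity. Together these cut out $\mathcal{M}^\tau_L(r)^0$ as an open subset of $\mathcal{M}^\tau_L(r)$.

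To exhibit a point of $\mathcal{M}^\tau_L(r)^0$, I would fix a stable vector bundle $E$ with $\det E=L$ (which exists since $g\geq 2$) together with an arbitrary isomorphism $\phi:E_{x_0}\to\mathbb{C}^r$, and check that $(E,\phi)$ is $\tau$-stable directly. For a proper subsheaf $E'\subsetneq E$, let $E''\subset E$ denote its saturation. Since $E''$ is saturated, $E''_{x_0}\hookrightarrow E_{x_0}$ is injective, so $\phi$ being an isomorphism forces $\epsilon(E'',\phi)=1$; combined with $\mu(E'')<\mu(E)$ from the stability of $E$, this yields the inequality \eqref{eq:st} for $E''$. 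For $E'$ itself: if $\epsilon(E',\phi)=1$, then $\deg E'\leq\deg E''$ already gives \eqref{eq:st} for $E'$; if $\epsilon(E',\phi)=0$, the image of $E'_{x_0}$ in $E_{x_0}$ vanishes, which forces $E'\subset\mathfrak{m}_{x_0}E''$ locally near $x_0$ and hence $\deg E'\leq\deg E''-\rk E'$, giving \eqref{eq:st} for $E'$ when combined with the estimate for $E''$.

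For irreducibility of $\mathcal{M}^\tau_L(r)$, I would invoke the GIT construction of \cite{HL}: the moduli space is realized as the geometric quotient of an open subset of a parameter scheme built from a Quot scheme on $X$ with fixed Hilbert polynomial, together with the affine datum of the framing. Quot schemes of coherent sheaves on a smooth projective curve with fixed Hilbert polynomial are irreducible, and irreducibility is preserved by passing to the open subset, to the affine bundle of framings, and to the GIT quotient by the connected reductive structure group. Hence $\mathcal{M}^\tau_L(r)$ is irreducible, and $\mathcal{M}^\tau_L(r)^0$ is dense. The main technical obstacle is the verification of \eqref{eq:st} for non-saturated subsheaves with $\epsilon=0$; the estimate closes up in the range of $\tau$ where $\mathcal{M}^\tau_L(r)$ is non-empty (in particular for $\tau<r$, beyond which subsheaves like $E(-x_0)$ would themselves destabilize even a stable bundle equipped with an isomorphism framing).
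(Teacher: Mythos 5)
Your strategy inverts the paper's. You establish irreducibility of all of ${\mathcal M}^\tau_L(r)$ first and then deduce density of ${\mathcal M}^\tau_L(r)^0$ from non-emptiness, whereas the paper proves density directly, by deforming an arbitrary $\tau$-stable framed bundle into the open locus (using that stable bundles form a dense open substack of the irreducible stack of coherent sheaves, citing \cite[Appendix]{Ho}), and then deduces irreducibility of the whole space from irreducibility of the dense open part. Your verification that a stable bundle with an isomorphism framing is $\tau$-stable is correct and considerably more explicit than the paper's ``it is easy to check''; the only slips are minor: for a full-rank proper subsheaf $E'$ the saturation is $E''=E$, so $\mu(E'')<\mu(E)$ fails and you must instead use $\deg E'<\deg E$ directly; and the restriction $\tau<r$ you flag is indeed harmless, since for $\tau\ge r$ the kernel of the composition $E\to E_{x_0}\to{\mathbb C}^r$ is a full-rank subsheaf with $\epsilon=0$ that violates \eqref{eq:st} for every framed bundle, so the moduli space is empty.

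The genuine gap is the irreducibility step. The assertion that ``Quot schemes of coherent sheaves on a smooth projective curve with fixed Hilbert polynomial are irreducible'' is not a standard fact and is false in the stated generality: the parameter scheme ${\rm Quot}_X({\mathcal O}(-m)^{\oplus N},P)$ contains quotients with torsion, and irreducibility of such Quot schemes (even of their torsion-free loci) is a nontrivial theorem known only for sufficiently large degree. Passing to an open subset does not repair this, since an open subset of a reducible scheme can be reducible; what you actually need is that the specific locally closed locus parametrizing $\tau$-stable framed bundles is irreducible. The natural way to obtain that is to observe that this locus fibres (via a ${\rm GL}_N$-torsor together with the framing data) over the corresponding substack of the moduli stack of coherent sheaves and to invoke the irreducibility of that stack --- which is precisely the input the paper takes from \cite[Appendix]{Ho}. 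You would also need to handle the fixed-determinant condition, i.e., irreducibility of the fibre of the determinant morphism over $[L]$. So the skeleton of your argument is viable, but its load-bearing step is an unjustified (and, as literally stated, incorrect) claim; once it is replaced by the irreducibility of the moduli stack, you are relying on the same key input as the paper, merely routed through the GIT quotient instead of an explicit deformation.
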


\begin{proof}
{}From the openness of the stability condition
it follows immediately that the locus of framed bundles
$(E\, ,\phi)$ such that $E$ is not stable is a closed subset
of the moduli space ${\mathcal M}^\tau_L(r)$
(see \cite[p. 635, Theorem 2.8(B)]{Ma}
for the openness of the stability condition). It is
easy to check that the locus of framed bundles
$(E\, ,\phi)$ such that $\phi$ is not an isomorphism
is a closed subset of ${\mathcal M}^\tau_L(r)$. Therefore,
${\mathcal M}^\tau_L(r)^0$ is a Zariski open subset of
${\mathcal M}^\tau_L(r)$.

We will now show that this open subset ${\mathcal M}^\tau_L(r)^0$
is dense.
Let $(E,\varphi)$ be a $\tau$--stable framed bundle.
The moduli stack of stable vector bundles is dense
in the moduli stack of coherent sheaves,
and both stacks are irreducible (see,
for instance, \cite[Appendix]{Ho}). Therefore we
can construct a family $\{E_t\}_{t\in T}$ of vector
bundles parametrized by an irreducible
smooth curve $T$ with a base point
$0\, \in\, T$ such that the following two conditions hold:
\begin{enumerate}
\item $E_0\cong E$, and

\item the vector bundle $E_t$ is stable for all
$t\,\not=\, 0$.
\end{enumerate}
Shrinking $T$ if necessary (by taking a nonempty Zariski open
subset of $T$), we get a family
of frames $\{\phi_t\}_{t\in T}$ such that $\phi_0$ is the
given frame $\phi$, and $\phi_t\,:\,E_{t,x_0}\,\longrightarrow\,
{\mathbb C}^r$
is an isomorphism for all $t\,\not=\, 0$. Since $E_t$ is stable,
and $\phi_t$ is an isomorphism, it is easy to check that
$(E_t,\phi_t)$ is $\tau$--stable. Therefore, ${\mathcal
M}^\tau_L(r)^0$ is dense in ${\mathcal M}^\tau_L(r)$.

To prove that ${\mathcal M}^\tau_L(r)$ is irreducible, first note
that ${\mathcal M}^\tau_L(r)^0$ is irreducible because the moduli
stack of stable vector bundles of fixed rank and determinant is
irreducible. Since ${\mathcal M}^\tau_L(r)^0\,\subset\,
{\mathcal M}^\tau_L(r)$
is dense, it follows that ${\mathcal M}^\tau_L(r)$ is irreducible.
\end{proof}

Let ${\mathcal N}_P$ be the moduli space of pairs of the form
$(E\, ,\ell)$, where $E$ is a stable vector bundle on $X$
of rank $r$ with determinant $L$, and $\ell\, \subset\,
E_{x_0}$ is a line. Consider ${\mathcal M}^\tau_L(r)^0$ defined in
\eqref{e1}. Let
\begin{equation}\label{e2}
\beta\, :\, {\mathcal M}^\tau_L(r)^0\, \longrightarrow\, {\mathcal N}_P
\end{equation}
be the morphism defined by $(E\, ,\phi)\, \longmapsto\,
(E\, ,\phi^{-1}({\mathbb C}\cdot e_1))$, where the standard basis of
${\mathbb C}^r$ is denoted by $\{e_1\, ,\ldots\, ,e_r\}$.

\begin{proposition}\label{prop1}
The variety ${\mathcal M}^\tau_L(r)^0$ is birational to the total
space of a vector bundle over ${\mathcal N}_P$.
\end{proposition}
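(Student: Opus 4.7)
The plan is to realize, via the morphism $\beta$, the variety $\mathcal{M}^\tau_L(r)^0$ as a Zariski open dense subset of the total space of a rank $r^2 - r + 1$ vector bundle on $\mathcal{N}_P$.

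First I would analyze the fiber of $\beta$ over a point $(E, \ell) \in \mathcal{N}_P$: it is the set of isomorphisms $\phi : E_{x_0} \to \mathbb{C}^r$ satisfying $\phi(\ell) = \mathbb{C} \cdot e_1$. Every such $\phi$ lies in the complex vector space
$$V_{(E,\ell)} \,:=\, \{\phi \in \mathrm{Hom}(E_{x_0}, \mathbb{C}^r) \,:\, \phi(\ell) \subset \mathbb{C} \cdot e_1\},$$
and the fiber is the dense Zariski open subset of $V_{(E,\ell)}$ where $\phi|_\ell \neq 0$ and the induced map $E_{x_0}/\ell \to \mathbb{C}^r/\mathbb{C} \cdot e_1$ is an isomorphism. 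The dimension of $V_{(E,\ell)}$ is $r^2 - r + 1$, as one sees from the short exact sequence
$$0 \,\longrightarrow\, \mathrm{Hom}(E_{x_0}, \mathbb{C} \cdot e_1) \,\longrightarrow\, V_{(E,\ell)} \,\longrightarrow\, \mathrm{Hom}(E_{x_0}/\ell, \mathbb{C}^r/\mathbb{C} \cdot e_1) \,\longrightarrow\, 0.$$

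Next I would globalize these vector spaces into a bundle. Over a Zariski open $U \subset \mathcal{N}_P$ on which a universal pair $(\mathcal{E}, \mathcal{L})$ exists, with $\mathcal{E}$ a vector bundle on $U \times X$ and $\mathcal{L}$ the tautological line subbundle of $\mathcal{E}|_{U \times \{x_0\}}$, the sheaf
$$\mathcal{V} \,:=\, \ker\!\left(\mathcal{H}om(\mathcal{E}|_{U \times \{x_0\}}, \mathcal{O}_U^r) \,\longrightarrow\, \mathcal{H}om(\mathcal{L}, \mathcal{O}_U^{r-1})\right)$$
is a vector bundle of rank $r^2 - r + 1$ on $U$. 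The morphism $\beta$ identifies $\mathcal{M}^\tau_L(r)^0 \cap \beta^{-1}(U)$ with a dense open subset of the total space of $\mathcal{V}$, producing the sought birational equivalence.

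The main obstacle is the possible failure of a universal pair $(\mathcal{E}, \mathcal{L})$ to exist globally on $\mathcal{N}_P$, owing to the Brauer-type obstruction familiar from the moduli of stable bundles on a curve. For a birational statement, however, it suffices to have the construction on a dense open $U$: any two choices of $(\mathcal{E}, \mathcal{L})$ there differ by tensoring with a line bundle pulled back from $U$, which modifies $\mathcal{V}$ only by a twist and therefore preserves the birational class of its total space. Combined with the irreducibility of $\mathcal{M}^\tau_L(r)^0$ established in Lemma~\ref{lem1}, this yields the desired birational equivalence between $\mathcal{M}^\tau_L(r)^0$ and the total space of a vector bundle on $\mathcal{N}_P$.
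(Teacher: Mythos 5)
There is a genuine gap, and it is ultimately a dimension count: you have forgotten that the framing is only well defined up to a nonzero scalar. Every bundle $E$ parametrized by ${\mathcal N}_P$ is stable, so $\text{Aut}(E)\,=\,{\mathbb C}^*$, and the automorphism $\lambda^{-1}\cdot\text{Id}_E$ gives an isomorphism of framed bundles $(E\, ,\phi)\,\cong\,(E\, ,\lambda\phi)$ for every $\lambda\,\in\,{\mathbb C}^*$. (This is also implicit in Section 3, where the forgetful morphism $\delta_1$ over the stable locus is a \emph{projective} bundle with fiber $P(\text{Hom}(E_{x_0},{\mathbb C}^r))$.) Hence the fiber of $\beta$ over $(E\, ,\ell)$ is not the open subset of $V_{(E,\ell)}$ that you describe, but its quotient by the scaling action of ${\mathbb C}^*$; it has dimension $r^2-r$, not $r^2-r+1$. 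Your identification of $\beta^{-1}(U)$ with an open subset of the total space of ${\mathcal V}$ is therefore not well defined (a point of the moduli space determines $\phi$ only up to scalar, and rescaling $\phi$ moves the corresponding point of ${\mathcal V}$), and no birational equivalence with a bundle of rank $r^2-r+1$ can exist, since the dimensions differ by one. The same ${\mathbb C}^*$ resurfaces in your discussion of the universal pair: the fiber of ${\mathcal E}\vert_{U\times\{x_0\}}$ at $(E\, ,\ell)$ is identified with $E_{x_0}$ only up to a scalar (as $E$ is simple), and choosing that identification is exactly the normalization your argument is missing.

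The repair is to normalize the scalar, which is what the paper's proof does. It constructs, as a bundle associated to the principal $\text{PGL}(r,{\mathbb C})$--bundle $f^*P_{\text{PGL}}$ (thereby also sidestepping the existence of a universal pair, since $({\mathbb C}\cdot e_1)^*\otimes{\mathbb C}^r$ is a genuine $\text{PGL}(r,{\mathbb C})$--module), a rank $r$ vector bundle $W$ on ${\mathcal N}_P$ with fiber $\text{Hom}(\ell\, ,E_{x_0})$ carrying the canonical nowhere vanishing section $\sigma$ given by the inclusion $\ell\,\hookrightarrow\,E_{x_0}$; over a dense open $U$ it splits off a complement $V$ of the line generated by $\sigma$, and sends $(E\, ,\phi)$ to $v\,\longmapsto\,\phi(v)/\lambda$, where $\lambda$ is determined by $\phi(\sigma)\,=\,\lambda\cdot e_1$. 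The division by $\lambda$ makes the assignment independent of the scalar, and the resulting bundle $V^*\otimes{\mathbb C}^r$ has the correct rank $r^2-r$. Your fiberwise analysis is otherwise sound: replacing ${\mathcal V}$ by the affine subbundle $\{\phi\,:\,\phi(\sigma)\,=\,e_1\}$, which is a torsor under a vector bundle of rank $r^2-r$, would bring your argument in line with the paper's.
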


\begin{proof}
We will first construct a tautological vector bundle over
${\mathcal N}_P$. Let ${\mathcal N}_L(r)$ be the moduli space
of stable vector bundles on $X$ of rank $r$ and determinant $L$.
Consider the projection
\begin{equation}\label{f}
f\, :\, {\mathcal N}_P\, \longrightarrow\, {\mathcal N}_L(r)
\end{equation}
defined by $(E\, ,\ell)\,\longrightarrow\, E$. Let
$P_{\text{PGL}}\, \longrightarrow\, {\mathcal N}_L(r)$ be the
principal $\text{PGL}(r, {\mathbb C})$--bundle corresponding to $f$;
the fiber of $P_{\text{PGL}}$ over any $E\, \in\, {\mathcal N}_L(r)$
is the space of all linear isomorphisms from $P({\mathbb C}^r)$
(the space of lines in ${\mathbb C}^r$)
to $P(E_{x_0})$ (the space of lines in $E_{x_0}$); since the
automorphism
group of $E$ is the nonzero scalar multiplications (recall that
$E$ is stable), the projective space $P(E_{x_0})$ is canonically
defined
by the point $E$ of ${\mathcal N}_L(r)$. Let
$$
Q\, \subset\, \text{PGL}(r, {\mathbb C})
$$
be the maximal parabolic subgroup that fixes the point of
$P({\mathbb C}^r)$ representing the line ${\mathbb C}\cdot e_1$.
The principal $\text{PGL}(r, {\mathbb C})$--bundle
$$
f^*P_{\text{PGL}}\, \longrightarrow\, {\mathcal N}_P
$$
has a tautological reduction of structure group
$$
\widetilde{E}_Q\, \subset\, f^*P_{\text{PGL}}
$$
to the parabolic subgroup $Q$; the fiber of $\widetilde{E}_Q$ over
any point $(E\, ,\ell)\, \in\, {\mathcal N}_P$ is the space of all
linear isomorphisms
$$
\rho\, :\, P({\mathbb C}^r)\,\longrightarrow\, P(E_{x_0})
$$ such that $\rho({\mathbb C}\cdot e_1) \,=\, \ell$.
The standard action of $\text{GL}(r, {\mathbb C})$
on ${\mathbb C}^r$ defines an action of $Q$ on $({\mathbb
C}\cdot e_1)^*\bigotimes_{\mathbb C} {\mathbb C}^r$. Let
\begin{equation}\label{e3}
W\, :=\, f^*P_{\text{PGL}}(({\mathbb C}\cdot
e_1)^*\otimes {\mathbb C}^r)\, \longrightarrow\, {\mathcal N}_P
\end{equation}
be the vector bundle over ${\mathcal N}_P$ associated to the
principal $\text{PGL}(r, {\mathbb C})$--bundle $f^*P_{\text{PGL}}$
for the above $\text{PGL}(r, {\mathbb C})$--module $({\mathbb
C}\cdot e_1)^*\bigotimes_{\mathbb C} {\mathbb C}^r$. The action
of $Q$ on
$({\mathbb C}\cdot e_1)^*\bigotimes_{\mathbb C} {\mathbb C}^r$ fixes
$$
\text{Id}_{{\mathbb C}\cdot e_1}\, \in\,
({\mathbb C}\cdot e_1)^*\otimes_{\mathbb C} {\mathbb C}^r
\,=\, \text{Hom}({\mathbb C}\cdot e_1\, , {\mathbb C}^r)\, .
$$
Therefore, the element $\text{Id}_{{\mathbb C}\cdot e_1}$ defines a
nonzero section
\begin{equation}\label{e4}
\sigma\, \in\, H^0({\mathcal N}_P,\, W)\, ,
\end{equation}
where $W$ is the vector bundle in \eqref{e3}. Note that the fiber of
$W$ over $(E,\ell)$ is $\ell^* \otimes E_{x_0}$, and the
evaluation of $\sigma$ at $(E,\ell)$ is $\text{Id}_\ell$.

The projective bundle $P(W)\, \longrightarrow\, {\mathcal N}_P$
parametrizing lines in $W$ is identified with the pullback
$f^*{\mathcal N}_P$ of the projective
bundle ${\mathcal N}_P$ to the total
space of ${\mathcal N}_P$, where $f$ is constructed in \eqref{f}.
The tautological section
${\mathcal N}_P\, \longrightarrow\, f^*{\mathcal N}_P$ of the
projection $f^*{\mathcal N}_P\, \longrightarrow\,{\mathcal N}_P$
coincides with the section given by $\sigma$ in \eqref{e4}.

Let $U\, \subset\, {\mathcal N}_P$ be some nonempty Zariski open
subset such that there exists
$$
V\, \subset\, W\vert_{U}\, ,
$$
a direct summand of the line subbundle of $W\vert_{U}$ generated
by $\sigma$. Consider the vector bundle
$$
{\mathcal W}\, :=\, V^*\otimes_{\mathbb C} {\mathbb C}^r
\, \longrightarrow\, U\, .
$$
The total space of $\mathcal W$ will also be denoted by
$\mathcal W$. Consider the map $\beta$ defined in \eqref{e2}. Let
$$
\gamma\,:\, {\mathcal M}^\tau_L(r)^0 \, \supset\,
\beta^{-1}(U)\, \longrightarrow\, \mathcal W
$$
be the morphism that sends any $y\,:=\, (E\, ,\phi)\,\in\,
\beta^{-1}(U)$ to the homomorphism
$$
V_{\beta(y)}\, \longrightarrow\, {\mathbb C}^r
$$
defined by $v\, \longmapsto\, \phi(v)/\lambda$, where
$\lambda\, \in\, {\mathbb C}^*-\{0\}$ 
satisfies the identity
$\phi(\sigma(\beta(y)))\,=\, \lambda\cdot e_1$.
The morphism $\gamma$ is clearly birational.
\end{proof}

\begin{theorem}\label{thm1}
The moduli space ${\mathcal M}^\tau_L(r)$ is rational.
\end{theorem}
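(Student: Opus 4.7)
The plan is to combine Lemma \ref{lem1} and Proposition \ref{prop1} to reduce the statement to the rationality of $\mathcal{N}_P$, and then derive that rationality from the known rationality of $\mathcal{N}_L(r)$ together with a generic triviality argument for the projective fibration $f$ from \eqref{f}.

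By Lemma \ref{lem1} the dense open subset $\mathcal{M}^\tau_L(r)^0$ is birational to $\mathcal{M}^\tau_L(r)$. By Proposition \ref{prop1}, $\mathcal{M}^\tau_L(r)^0$ is in turn birational to the total space of the vector bundle $\mathcal{W} = V^* \otimes_{\mathbb{C}} \mathbb{C}^r$ over the Zariski open set $U \subset \mathcal{N}_P$; since $V$ has rank $r-1$, this vector bundle has rank $r(r-1)$. Any algebraic vector bundle is trivial on a sufficiently small Zariski open subset of its base, so $\mathcal{W}$ is birational to $\mathcal{N}_P \times \mathbb{A}^{r(r-1)}$. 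Therefore it suffices to prove that $\mathcal{N}_P$ is rational.

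The morphism $f \colon \mathcal{N}_P \to \mathcal{N}_L(r)$ is a $\mathbb{P}^{r-1}$-fibration obtained by projectivizing the principal $\text{PGL}(r, \mathbb{C})$-bundle $P_{\text{PGL}}$. The base $\mathcal{N}_L(r)$ is known to be rational (by King--Schofield in the coprime case, with extensions available more generally), so to conclude one needs the birational splitting $\mathcal{N}_P \sim \mathcal{N}_L(r) \times \mathbb{P}^{r-1}$; this is the main obstacle. When $\gcd(r, \deg L) = 1$ a universal bundle exists on $X \times \mathcal{N}_L(r)$, so $f$ is Zariski-locally trivial and the splitting is immediate. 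In the remaining cases only a universal projective bundle exists and one must verify that its Brauer class in $\mathrm{Br}(k(\mathcal{N}_L(r)))$ vanishes after restriction to the generic point. As pointed out in the introduction, one may alternatively appeal to Example 6.9 of \cite{Ho-extra} with $D = \{x_0\}$, which directly yields rationality in our setting and sidesteps the Brauer-class verification.
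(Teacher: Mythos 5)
Your first paragraph reproduces the paper's reduction exactly: Lemma \ref{lem1} gives density of $\mathcal{M}^\tau_L(r)^0$, and Proposition \ref{prop1} together with Zariski-local triviality of vector bundles gives a birational equivalence with $\mathcal{N}_P\times\mathbb{A}^{r(r-1)}$ (your rank count for $V$ and $\mathcal{W}$ is correct). The gap is in how you then handle $\mathcal{N}_P$. You propose to split the fibration $f\colon\mathcal{N}_P\to\mathcal{N}_L(r)$ birationally and invoke rationality of the base, and both halves of that plan fail outside the coprime case. First, the ``verification'' that the Brauer class of $f$ vanishes at the generic point cannot succeed: by \cite[Proposition 1.2(a)]{BBGN} (the very fact used in Lemma \ref{lem2}) the class $\mathrm{cl}(\mathcal{N}_P)$ \emph{generates} $\mathrm{Br}(\mathcal{N}_L(r))$, which is a nontrivial cyclic group whenever $\gcd(r,\deg L)>1$; since $\mathcal{N}_L(r)$ is smooth, its Brauer group injects into $\mathrm{Br}(\mathbb{C}(\mathcal{N}_L(r)))$, so the generic fiber of $f$ is a nontrivial Severi--Brauer variety and $\mathcal{N}_P$ is \emph{not} birational to $\mathcal{N}_L(r)\times\mathbb{P}^{r-1}$. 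Second, even granting a splitting, the rationality of $\mathcal{N}_L(r)$ itself is only known when $\gcd(r,\deg L)=1$ (King--Schofield); in the general case one only has weaker statements, so the base of your fibration argument is also not available.

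The paper avoids both problems by quoting \cite[Theorem 6.2]{BY}: $\mathcal{N}_P$ is a moduli space of parabolic bundles (a one-step flag at the single point $x_0$), and Boden--Yokogawa prove rationality of such spaces with no coprimality hypothesis --- the parabolic structure is exactly what kills the Brauer obstruction, even though the fibration over $\mathcal{N}_L(r)$ does not split and the rationality of $\mathcal{N}_L(r)$ remains open in general. Your fallback appeal to \cite[Example 6.9]{Ho-extra} with $D=x_0$ is a legitimate complete alternative (the paper mentions it in the introduction for precisely this purpose), but as written it is an independent theorem substituting for your argument rather than a repair of it; to make your write-up correct you should delete the splitting step and cite the rationality of $\mathcal{N}_P$ directly.
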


\begin{proof}
Since any vector
bundle is Zariski locally trivial, the total space
of a vector bundle of rank $n$ over ${\mathcal N}_P$ is
birational to ${\mathcal N}_P\times {\mathbb A}^n$.
Therefore, from Proposition \ref{prop1} we conclude that
${\mathcal M}^\tau_L(r)^0$ is birational to ${\mathcal N}_P\times
{\mathbb A}^n$, where $n\,=\, \dim {\mathcal M}^\tau_L(r)^0
-\dim {\mathcal N}_P$.

The variety ${\mathcal N}_P$ is known to be rational
\cite[p.\ 472, Theorem 6.2]{BY}. Hence
${\mathcal N}_P\times {\mathbb A}^n$ is rational, implying
that ${\mathcal M}^\tau_L(r)^0$ is rational. Now from Lemma
\ref{lem1} we infer that ${\mathcal M}^\tau_L(r)$ is rational.
\end{proof}

\section{Brauer group of moduli of framed bundles}

We quickly recall the definition of Brauer group of a variety $Z$.
Using the natural isomorphism ${\mathbb C}^r\otimes
{\mathbb C}^{r'}\,\stackrel{\sim}{\longrightarrow}\,{\mathbb C}^{rr'}$,
we have a homomorphism
$\text{PGL}(r,{\mathbb C})\times \text{PGL}(r',{\mathbb C})
\,\longrightarrow\, \text{PGL}(rr',{\mathbb C})$. So a
principal $\text{PGL}(r,{\mathbb C})$--bundle $\mathbb P$ and
a principal $\text{PGL}(r',{\mathbb C})$--bundle
${\mathbb P}'$ on $Z$ together produce
a principal $\text{PGL}(rr',{\mathbb C})$--bundle
on $Z$, which we will denote by ${\mathbb P}\otimes {\mathbb P}'$.
The two principal
bundles $\mathbb P$ and ${\mathbb P}'$
are called \textit{equivalent} if there are vector bundles
$V$ and $V'$ on $Z$ such that the principal bundle
${\mathbb P}\otimes {\mathbb P}(V)$ is isomorphic to
${\mathbb P}'\otimes {\mathbb P}(V')$. The equivalence classes
form a group which is called the \textit{Brauer group} of $Z$.
The addition operation is defined
by the tensor product, and the inverse is defined to be the dual
projective bundle. The Brauer group of $Z$ will be
denoted by $\text{Br}(Z)$.

As before, fix $r$ and $L$. Define
$$
\tau(r) \, :=\, \frac{1}{(r-1)! (r-1)}\, .
$$
Henceforth, we assume that
$$
\tau\, \in\, (0\, , \tau(r))\, ,
$$
where $\tau$ is the parameter in the definition of a
(semi)stable framed bundle. As before, let ${\mathcal M}^\tau_L(r)$
be the moduli space of $\tau$--stable framed bundles of rank
$r$ and determinant $L$.

Let $\overline{\mathcal N}_L(r)$ be the moduli space of
semistable vector bundles on $X$ of rank $r$ and determinant
$L$. As in the previous section, the moduli space of stable
vector bundles on $X$ of rank
$r$ and determinant $L$ will be denoted by ${\mathcal N}_L(r)$.

If $E$ is a stable vector bundle of rank $r$ and determinant
$L$, then for any nonzero homomorphism
$$
\phi\, :\, E_{x_0}\, \longrightarrow\, {\mathbb C}^r\, ,
$$
the framed bundle $(E\, ,\phi)$ is $\tau$--stable (see
\cite[Lemma 1.2(ii)]{BGM}). Also, if $(E\, ,\phi)$ is any
$\tau$--stable framed bundle, then $E$ is semistable
\cite[Lemma 1.2(i)]{BGM}. Therefore, we have a morphism
\begin{equation}\label{delta}
\delta\, :\, {\mathcal M}^\tau_L(r)\, \longrightarrow\,
\overline{\mathcal N}_L(r)
\end{equation}
defined by $(E\, ,\phi)\, \longrightarrow\,E$. Define
\begin{equation}\label{d2}
{\mathcal M}^\tau_L(r)'\, :=\, \delta^{-1}({\mathcal N}_L(r))\,
\subset\, {\mathcal M}^\tau_L(r)\, ,
\end{equation}
where $\delta$ is the morphism in \eqref{delta}. From the
openness of the stability condition (mentioned in the
proof of Lemma \ref{lem1}) it follows that ${\mathcal M}^\tau_L(r)'$
is a Zariski open subset of ${\mathcal M}^\tau_L(r)$.

\begin{lemma}\label{lem2}
The Brauer group of the variety ${\mathcal M}^\tau_L(r)'$ vanishes.
\end{lemma}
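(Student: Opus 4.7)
The plan is to realize the forgetful morphism $\delta:{\mathcal M}^\tau_L(r)'\to {\mathcal N}_L(r)$ of \eqref{delta}--\eqref{d2} as an \'etale-locally trivial projective bundle, and then to apply the projective-bundle formula for the Brauer group.

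First I would work out the fiber structure. Under $\tau\in(0,\tau(r))$, \cite[Lemma 1.2]{BGM} tells us that any pair $(E,\phi)$ with $E$ stable and $\phi$ nonzero is $\tau$-stable. Since $\text{Aut}(E)={\mathbb C}^*$ acts on $\phi$ by scaling, the fiber $\delta^{-1}(E)$ is canonically the projective space ${\mathbb P}(\text{Hom}(E_{x_0},{\mathbb C}^r))\simeq {\mathbb P}^{r^2-1}$. Because a universal bundle on $X\times {\mathcal N}_L(r)$ exists \'etale locally, $\delta$ is in fact a Brauer--Severi bundle of relative dimension $r^2-1$; denote its Brauer class by $\beta\in \text{Br}({\mathcal N}_L(r))$.

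Next I would apply the Leray spectral sequence for $\delta$ and ${\mathbb G}_m$. Using $\delta_*{\mathbb G}_m={\mathbb G}_m$, $R^1\delta_*{\mathbb G}_m={\mathbb Z}$ and $R^2\delta_*{\mathbb G}_m=0$ (the fibers are projective spaces over an algebraically closed field), together with the simple-connectedness of ${\mathcal N}_L(r)$, which forces $H^1({\mathcal N}_L(r),{\mathbb Z})=0$, I would extract the exact sequence
\[
{\mathbb Z}\,\xrightarrow{\,d\,}\,\text{Br}({\mathcal N}_L(r))\,\xrightarrow{\,\delta^*\,}\,\text{Br}({\mathcal M}^\tau_L(r)')\,\longrightarrow\,0,
\]
with $d(1)=\beta$. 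Hence $\text{Br}({\mathcal M}^\tau_L(r)')\cong \text{Br}({\mathcal N}_L(r))/\langle\beta\rangle$.

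The proof is completed by showing that $\beta$ generates $\text{Br}({\mathcal N}_L(r))$. Because $\text{Aut}(E,\phi)=\{\text{id}\}$ whenever $E$ is stable and $\phi$ is nonzero, the moduli space ${\mathcal M}^\tau_L(r)'$ is fine and carries a universal vector bundle $\widetilde{\mathcal E}$ on $X\times {\mathcal M}^\tau_L(r)'$. Consequently the Brauer obstruction $\alpha\in \text{Br}(X\times {\mathcal N}_L(r))$ to the existence of a universal bundle on $X\times {\mathcal N}_L(r)$ pulls back to zero under $\text{id}_X\times\delta$; restricting to $\{x_0\}$ gives $\delta^*\alpha_{x_0}=0$, so $\alpha_{x_0}\in\langle\beta\rangle$. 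Invoking the known theorem that $\alpha_{x_0}$ generates $\text{Br}({\mathcal N}_L(r))$ then forces $\langle\beta\rangle=\text{Br}({\mathcal N}_L(r))$ and yields $\text{Br}({\mathcal M}^\tau_L(r)')=0$. The main obstacle is this last generation statement, which is a substantive result about Brauer groups of moduli spaces of stable vector bundles; the fiber identification and the Leray computation are essentially formal.
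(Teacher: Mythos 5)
Your argument is correct, and its skeleton coincides with the paper's: both realize the forgetful map $\delta_1$ as a $\mathbb{P}^{r^2-1}$--bundle over ${\mathcal N}_L(r)$, both invoke the exact sequence ${\mathbb Z}\,\longrightarrow\,\text{Br}({\mathcal N}_L(r))\,\longrightarrow\,\text{Br}({\mathcal M}^\tau_L(r)')\,\longrightarrow\,0$ in which the first arrow hits the class of that bundle (the paper cites Gabber for this; you rederive it from the Leray spectral sequence, which is fine), and both ultimately rest on the theorem of Balaji--Biswas--Gabber--Nagaraj that $\text{Br}({\mathcal N}_L(r))$ is generated by $\alpha_{x_0}=\text{cl}({\mathcal N}_P)$. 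Where you genuinely diverge is in how the class $\beta$ of the bundle $\delta_1$ is identified. The paper computes it outright: the fiber over $E$ is $P(E_{x_0}^*\otimes{\mathbb C}^r)$, so ${\mathcal M}^\tau_L(r)'=({\mathcal N}_P)^*\otimes{\mathbb P}$ with ${\mathbb P}$ trivial, giving $\beta=-\text{cl}({\mathcal N}_P)$, visibly a generator. You instead argue indirectly: the framed moduli space is fine (Huybrechts--Lehn), so the universal projective bundle on $X\times{\mathcal N}_L(r)$ pulls back under $\text{id}_X\times\delta_1$ to the projectivization of an honest vector bundle, forcing $\delta_1^*\alpha_{x_0}=0$, i.e.\ $\alpha_{x_0}\in\ker\delta_1^*=\langle\beta\rangle$; since $\alpha_{x_0}$ generates, $\langle\beta\rangle$ is all of $\text{Br}({\mathcal N}_L(r))$. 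This proves slightly less --- you never pin down $\beta$ itself, only that the subgroup it generates contains a generator --- but that is all the exact sequence requires. The trade-off is that you replace the tensor-product decomposition of the projective bundle by two other inputs: the fineness of ${\mathcal M}^\tau_L(r)'$ and the compatibility of the universal projective bundle with classifying maps; both are standard but should be stated explicitly if you write this up. Either route is sound.
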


\begin{proof}
We noted above that $(E\, ,\phi)$ is $\tau$--stable if
$E$ is stable. Therefore, the morphism
$$
\delta_1\,:=\, \delta\vert_{{\mathcal M}^\tau_L(r)'}\, :\,
{\mathcal M}^\tau_L(r)'\, \longrightarrow\,{\mathcal N}_L(r)
$$
defines a projective bundle over ${\mathcal N}_L(r)$,
where $\delta$ is constructed in \eqref{delta}; for notational
convenience, this projective bundle ${\mathcal M}^\tau_L(r)'$ will
be denoted by ${\mathcal P}$. The homomorphism
$$
\delta^*_1\, :\, \text{Br}({\mathcal N}_L(r))\,\longrightarrow
\, \text{Br}({\mathcal P})
$$
is surjective, and the kernel of $\delta^*_1$ is generated
by the Brauer class
$$
\text{cl}({\mathcal P})\, \in\, \text{Br}({\mathcal
N}_L(r))
$$
of the projective bundle ${\mathcal P}$
(see \cite[p.\ 193]{Ga}). In other words, we have an exact
sequence
\begin{equation}\label{es}
{\mathbb Z}\cdot \text{cl}({\mathcal P})\,\longrightarrow
\, \text{Br}({\mathcal N}_L(r))\,
\stackrel{\delta^*_1}{\longrightarrow}\,\text{Br}
({\mathcal M}^\tau_L(r)')\, \longrightarrow\, 0\, .
\end{equation}

Let
$$
{\mathbb P}\,:=\, {\mathcal N}_L(r)\times P({\mathbb C}^r)\,
\longrightarrow\, {\mathcal N}_L(r)
$$
be the trivial projective bundle over ${\mathcal N}_L(r)$.
Consider the projective bundle
$$
f\, :\, {\mathcal N}_P\, \longrightarrow\, {\mathcal N}_L(r)
$$
in \eqref{f}. Let
$$
({\mathcal N}_P)^*\, \longrightarrow\, {\mathcal N}_L(r)
$$
be the dual projective bundle; so the fiber of
$({\mathcal N}_P)^*$ over any point $z\, \in\,
{\mathcal N}_L(r)$ is the space of all hyperplanes in the
fiber of ${\mathcal N}_P$ over $z$. It is easy to see that
\begin{equation}\label{f2}
{\mathcal P}\,=\, ({\mathcal N}_P)^*\otimes {\mathbb P}
\end{equation}
(the tensor product of two projective bundles was defined
at the beginning of this section).

Since ${\mathbb P}$ is a trivial projective bundle, from
\eqref{f2} it follows that
$$
\text{cl}({\mathcal P})\, =\, \text{cl}(({\mathcal N}_P)^*)
\,=\, -\text{cl}({\mathcal N}_P)
\, \in \, \text{Br}({\mathcal N}_L(r))\, .
$$
But the Brauer group $\text{Br}({\mathcal N}_L(r))$ is generated
by $\text{cl}({\mathcal N}_P)$ \cite[Proposition 1.2(a)]{BBGN}.
Hence $\text{cl}({\mathcal P})$ generates
$\text{Br}({\mathcal N}_L(r))$. Now from \eqref{es} we conclude
that $\text{Br}({\mathcal M}^\tau_L(r)')\,=\, 0$.
\end{proof}

\begin{corollary}\label{cor1}
The Brauer group of the moduli space ${\mathcal M}^\tau_L(r)$
vanishes.
\end{corollary}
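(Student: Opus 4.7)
The plan is to deduce vanishing of $\text{Br}(\mathcal{M}^\tau_L(r))$ from the vanishing of $\text{Br}(\mathcal{M}^\tau_L(r)')$ established in Lemma \ref{lem2}, by invoking the standard injectivity of the Brauer group under restriction to a dense open subset of a smooth variety.

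First I would verify that $\mathcal{M}^\tau_L(r)'$ is a dense open subset. Openness was already recorded after \eqref{d2}. For density, note that the subset $\mathcal{M}^\tau_L(r)^0$ of Lemma \ref{lem1} consists of pairs $(E,\phi)$ with $E$ stable, so $\mathcal{M}^\tau_L(r)^0 \subset \mathcal{M}^\tau_L(r)'$; since $\mathcal{M}^\tau_L(r)^0$ is already dense in $\mathcal{M}^\tau_L(r)$, so is $\mathcal{M}^\tau_L(r)'$.

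Next I would use the fact that $\mathcal{M}^\tau_L(r)$ is smooth (this is the consequence of \cite{HL} recalled in Section 2). For a smooth variety $Z$ over $\mathbb{C}$, the Auslander--Goldman theorem gives injectivity of the natural map $\text{Br}(Z) \hookrightarrow \text{Br}(\mathbb{C}(Z))$ into the Brauer group of the function field. Applied to $Z = \mathcal{M}^\tau_L(r)$ and to the dense open $U = \mathcal{M}^\tau_L(r)'$, one obtains the factorization
\begin{equation*}
\text{Br}(\mathcal{M}^\tau_L(r)) \,\longrightarrow\, \text{Br}(\mathcal{M}^\tau_L(r)') \,\hookrightarrow\, \text{Br}(\mathbb{C}(\mathcal{M}^\tau_L(r)))\, ,
\end{equation*}
whose composition is the injection given by Auslander--Goldman. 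Hence the restriction map $\text{Br}(\mathcal{M}^\tau_L(r)) \to \text{Br}(\mathcal{M}^\tau_L(r)')$ is itself injective.

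Combining this injectivity with Lemma \ref{lem2}, which says that $\text{Br}(\mathcal{M}^\tau_L(r)')=0$, forces $\text{Br}(\mathcal{M}^\tau_L(r))=0$. There is no real obstacle here; the only point to be careful about is invoking smoothness of $\mathcal{M}^\tau_L(r)$ (not merely of $\mathcal{M}^\tau_L(r)'$) so that the injectivity of Brauer group restriction is available globally.
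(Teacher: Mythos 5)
Your proposal is correct and follows exactly the same route as the paper: restriction to the nonempty open subset ${\mathcal M}^\tau_L(r)'$ is injective on Brauer groups, so Lemma \ref{lem2} gives the result. The paper simply asserts this injectivity, whereas you supply the standard justification (Auslander--Goldman injectivity into $\text{Br}(\mathbb{C}(\mathcal{M}^\tau_L(r)))$, using smoothness and the irreducibility from Lemma \ref{lem1}), which is a welcome but not essentially different elaboration.
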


\begin{proof}
Since ${\mathcal M}^\tau_L(r)'$ is a nonempty Zariski open
subset of ${\mathcal M}^\tau_L(r)$, the homomorphism
$$
\text{Br}({\mathcal M}^\tau_L(r))\, \longrightarrow\,
\text{Br}({\mathcal M}^\tau_L(r)')
$$
induced by the inclusion ${\mathcal M}^\tau_L(r)'\,
\hookrightarrow\, {\mathcal M}^\tau_L(r)$ is injective. 
Therefore, from Lemma \ref{lem2} it follows that
$\text{Br}({\mathcal M}^\tau_L(r))\,=\, 0$.
\end{proof}

%%%%%%%%%%%%%%%%%%%%%%%%%%%%%%%%%%%%%%%%%%%%%%%%%%%%%%%%%%%%%%

\end{document}